\let\color@begingroup\relax
   \let\color@endgroup\relax}{}%
\def\fix@ieeecolor@hbox#1{%
  \hbox{\color@begingroup#1\color@endgroup}}
\patchcmd\@makecaption{\hbox}{\fix@ieeecolor@hbox}{}{\FAILED}
\patchcmd\@makecaption{\hbox}{\fix@ieeecolor@hbox}{}{\FAILED}
\def\BibTeX{{\rm B\kern-.05em{\sc i\kern-.025em b}\kern-.08em
    T\kern-.1667em\lower.7ex\hbox{E}\kern-.125emX}}
\newcommand{\htwo}{H\textsubscript{2}\xspace}
\theoremstyle{remark}
\newtheorem{proposition}{Proposition}
\newtheorem{theorem}{Theorem}
\newtheorem{remark}{Remark}
\def\d{\partial}
\def\mR{\mathbb{R}}
\pgfplotsset{compat=1.14}
\begin{document}
\title{\LARGE \bfseries Modeling and Optimization of Steady Flow of Natural Gas and Hydrogen Mixtures in Pipeline Networks}
\author{Saif R. Kazi $^{\dagger}$, Kaarthik Sundar$^{*}$, Shriram Srinivasan$^{\dagger}$, Anatoly Zlotnik$^{\dagger}$
\thanks{$^{\dagger}$Applied Mathematics and Plasma Physics Group, Los Alamos National
Laboratory, Los Alamos, New Mexico, USA. E-mail: \texttt{\{skazi,shrirams,azlotnik\}@lanl.gov}}\;
\thanks{$^{*}$Information Systems and Modeling Group, Los Alamos National
Laboratory, Los Alamos, New Mexico, USA. E-mail: \texttt{{kaarthik}@lanl.gov}}\;
\thanks{The authors acknowledge the funding provided by LANL's Directed Research and Development (LDRD) projects: ``20220006ER: Fast, Linear Programming-Based Algorithms with Solution Quality Guarantees for Nonlinear Optimal Control Problems'' and ``20230617ER: Efficient Multi-scale Modeling of Clean Hydrogen Blending in Large Natural Gas Pipelines to Reduce Carbon Emissions''. The authors also thank the U.S. Department of Energy's Advanced Grid Modeling (AGM) projects Joint Power System and Natural Gas Pipeline Optimal Expansion and Dynamical Modeling, Estimation, and Optimal Control of Electrical Grid-Natural Gas Transmission Systems. The research work conducted at Los Alamos National Laboratory is done under the auspices of the National Nuclear Security Administration of the U.S. Department of Energy under Contract No. 89233218CNA000001. 
}\;
}
\maketitle

\begin{abstract}
We extend the canonical problems of simulation and optimization of steady-state gas flows in pipeline networks with compressors to the transport of mixtures of highly heterogeneous gases injected throughout a network.  Our study is motivated by proposed projects to blend hydrogen generated using clean energy into existing natural gas pipeline systems as part of efforts to reduce the reliance of energy systems on fossil fuels.  Flow in a pipe is related to endpoint pressures by a basic Weymouth equation model, with an ideal gas equation of state, where the wave speed depends on the hydrogen concentration. At vertices, in addition to  mass balance, we also consider mixing of incoming flows of varying hydrogen concentrations.  The problems of interest are the heterogeneous gas flow simulation (HGFS), which determines system pressures and flows given fixed boundary conditions and compressor settings, as well as the heterogeneous gas flow optimization (HGFO), which extremizes an objective by determining optimal boundary conditions and compressor settings.  We examine conditions for uniqueness of solutions to the HGFS, as well as compare and contrast mixed-integer and continuous nonlinear programming formulations for the HGFO.  We develop computational methods to solve both problems, and examine their performance using four test networks of increasing complexity.
\end{abstract}

\begin{IEEEkeywords}
Gas Pipeline Network, Hydrogen, Natural Gas, Optimization
\end{IEEEkeywords}
\IEEEpeerreviewmaketitle

\section{Introduction} \label{sec:intro}
The ongoing transition away from the use of fossil fuels has compelled the development of technologies for production, transport, and utilization of hydrogen gas for electricity production, transportation fuel, and residential use \cite{haeseldonckx2007use}. However,  natural gas still accounts for a large proportion of primary energy consumption throughout the world, and it is transported from sources to consumers using pipelines.  Interest has developed recently in blending hydrogen generated using clean energy into existing infrastructure designed decades ago for natural gas, so that the significant capital investments made in these systems can continue to generate revenue while supporting the energy transition \cite{witkowski2018analysis}.  Such power-to-gas technologies can also support power transmission systems by using excess energy generated by intermittent and unpredictable wind and solar sources to generate hydrogen \cite{guandalini2017dynamic}.  However, there are multiple technical and conceptual challenges to the development of these technologies \cite{melaina2013blending,hafsi2018hydrogen,schuster2020centrifugal,raju2022}, and the fundamentals of modeling, simulation and optimization of pipelines must be revised to account for location-dependent hydrogen blending.

Natural gas transmission systems are designed to transport processed pipeline quality gas, which is typically 95\% methane, 3-4\% ethane, and 1-2\% other longer chain hydrocarbons.  The actual content of gas transport networks is monitored using gas chromatographs, and is constituted by a mixture of different gas species injected by suppliers that process gas from different formations \cite{folga2007natural}. Isothermal natural gas flow models assume homogeneous gas, and can be formulated in terms of mass flow and density, where pressure is obtained by a nonlinear function of density according to an equation of state and compressibility model \cite{thorley87,gyrya2019explicit}.  In contrast, when natural gas constituents and much lighter hydrogen flow through a network, they meet and mix at network junctions, and this altering chemical composition also alters the physical properties of the gas, which affects aspects of physical transport and energy content in a location-dependent manner \cite{hante2019complementarity}.  The existing models must be extended to account for gas composition, specific calorific value, and gas gravity, which are tracked throughout the network using a mixing model. 

Methodologies for steady-state simulation and optimization of gas pipeline network flows are well-established and validated \cite{koch2015evaluating,bermudez2015simulation}.  These, as well as transient simulation solvers based on well-known techniques \cite{osiadacz1984simulation}, are used widely in the pipeline industry to evaluate capacity.  Efforts to extend analysis of natural gas pipeline flows to account for hydrogen blending were found to be surprisingly challenging in both the steady-state \cite{tabkhi2008mathematical} and transient \cite{zhang2022modelling} regimes.  Though the physical flow modeling is well-understood and validated for single pipelines \cite{chaczykowski2018gas}, a critical difficulty arises in systems with loops when the flow direction is not known \emph{a priori}.  In addition, the physical properties of much lighter hydrogen gas increase the numerical ill-conditioning in equations for flow computations of the blend, compared with pure natural gas.

We extend recent results on natural gas network flow solvers \cite{kekatos2019,kekatos2020,srinivasan2022numerical} and natural gas network flow optimization \cite{koch2015evaluating,wu2017adaptive} to develop robust, scalable simulation and optimization solvers that account for non-uniform injection of hydrogen gas into a pipeline network.  We first consider the heterogeneous gas flow simulation (HGFS), which determines system pressures and flows given fixed boundary conditions and compressor settings, and prove uniqueness of the solution for the special case of tree networks.  We then formulate the heterogeneous gas flow optimization (HGFO), which optimizes the boundary conditions and compressor settings in order to maximize an objective that represents the economic value for users of the pipeline system.  Our main focus here is on modeling and comparing optimization formulation approaches that enable \emph{scalable} computation, and we expect modeling extensions, economic interpretations, and transient problems to be addressed in future studies.

We first present flow modeling for a single pipe in Section \ref{sec:single-pipe}, and then extend this to networks with compressors in Section \ref{sec:network}.  We then state the HGFS problem and examine conditions for uniqueness of its solution in Section \ref{sec:hgfs}, and state the HGFO problem and consider its reformulation using non-smooth equations, complementarity constraints, and mixed integer formulations in Section \ref{sec:hgfo}.  Thereafter, we examine the performance of the developed algorithms using scenarios with an 8-node tree network, an 8-node looped network, the GasLib-11 instance, and the GasLib-40 instance in Section \ref{sec:results}, and finally conclude in Section \ref{sec:conc}.

\section{Steady flow in a pipe} \label{sec:single-pipe}

Modeling and optimizing the flow of gaseous mixtures in pipeline networks is a notoriously challenging problem \cite{van1998gas,chaczykowski2018gas}. Complications arise because of poorly understood interactions between the gases in turbulent flow and the need to account for the relative proportions of gas components at each spatial location, as well as their individual kinematic quantities and constitutive responses \cite{brethouwer1999direct}.  Accordingly, high-fidelity modeling that incorporates most or all of these factors makes network optimization numerically intractable. We narrow our focus  to make a set of assumptions that are justifiable for large scale steady-state network flows and which facilitate models that are tractable for optimization.

The inherent complexities in this setting require more information to be provided in boundary and initial conditions than is required for network flow optimization of a homogeneous gas \cite{rios2015optimization}.  Prior studies have considered transport over networks with a single source of mixed gas \cite{elaoud2017numerical}, so that the mixture is essentially assumed to be homogeneous. Here our goal is to model the flow of a NG-\htwo gas mixture in a network of pipelines where these constituent gases could be injected at some subset of nodes, and where mixtures of \emph{a priori} unknown fractions may be withdrawn at other locations.  Thus, we adopt a simplified view of flow modeling in order to enable generalizable boundary conditions over a network.  

In general, one of three kinds of boundary conditions can be prescribed at a node:  (1) A given quantity of gaseous mixture of known concentration may be injected (``injection nodes'');  (2) A gas mixture of known concentration could be injected at given pressure (``slack nodes''); or (3) A given quantity of the mixture may be withdrawn (``withdrawal nodes''). Note that at withdrawal nodes, we do not specify concentration because we suppose that the mass fractions cannot be controlled locally. Moreover, specifying only pressure but not concentration at a node with net injection to the network is insufficient when there are multiple gas constituents at varying fractions.  Concentration must be specified together with pressure (for a slack node) or flow (for an injection node) if gas flows into the network at that node.  \emph{We suppose that slack nodes are associated with inflows to the network.}  Finally, we remark that although it is possible to have a node where quantities of gaseous mixtures of known concentration may be withdrawn and also injected, we suppose for ease of notation and presentation that individual nodes are subject only to injection or withdrawal.  However,  the results presented in this paper can be extended to that general case as well. 
\subsection{Governing equations for steady flow in a pipe}
\begin{figure}[htb]
    \centering
    \includegraphics[scale=0.8]{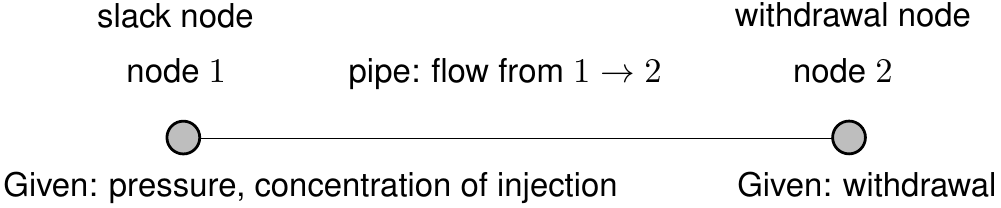}
    \caption{The combination of boundary conditions required for a well-defined problem of flow of NG-\htwo mixture in a single pipe. Any slack node where pressure is specified must be an injection node, and concentration of the injected gas must be specified. For steady flow, gas withdrawn at the other end must necessarily have the same concentration.}
    \label{fig:single_pipe} 
\end{figure}
We first present our assumptions and the resulting governing equations for a single pipe.  For a single pipe as shown in Fig.~\ref{fig:single_pipe}, one end must be a slack node while the other end is a withdrawal node.  This combination of boundary conditions is imposed by the topology.  While the flow solution is degenerate if pressure is unknown at both ends of the pipe, we do not by convention define slack nodes at both ends either.  Next, we expect that if the pipe is long enough, both gas constituents eventually move at the same velocity within the pipe.  Conservation principles ensure that for steady flow under this configuration, the composition at the withdrawal node will be identical to that at the injection.   

%This verbal description of flow in a single pipe will now be codified in a mathematical model with stated assumptions.
Our assumptions on mixing are: (i) the gases are chemically inert, so that there is no chemical reaction, and flow is isothermal; (ii) at nodes, the gases mix quickly to form a homogeneous mixture so that for a long pipe, it may be assumed that constituent gases travel with the mixture velocity; and (iii) we restrict our attention to steady flows.

% Before we present the model for steady for of a mixture of NG and \htwo in a pipe, we present assumptions related to the mixing of the two gases: (i) the mixing is homogenous and instantaneous, (ii) once mixed there is no chemical reaction between the two gases, and (iii) the velocity of the individual gases are equal to one another. 

The isothermal flow of a compressible gas in a pipeline is described by the Euler equations in one dimension \cite{thorley87}.  A number of assumptions are made for large-scale pipeline flow in the typical regime \cite{osiadacz1984simulation,roald2020uncertainty}, which we extend here to the flow of an NG-\htwo blend through a single pipe.  We write mass and momentum balance for each gas in the mixture:
\begin{subequations}
\begin{gather}
  \frac{\d\rho^h}{\d t}+\frac{\d \varphi^h}{\d x} = 0, 
  \label{eq:h2-mass} \\
  \frac{\d\rho^{ng}}{\d t}+\frac{\d \varphi^{ng}}{\d x} = 0, 
  \label{eq:ng-mass} \\
  \frac{\d\varphi^h}{\d t} + \frac{\d p^h}{\d x}  = 
  -\frac{\lambda}{2D}\frac{\varphi^h |\varphi^h|}{\rho^h} + I_f,  		
  \label{eq:h2-momentum} \\
  \frac{\d\varphi^{ng}}{\d t} + \frac{\d p^{ng}}{\d x}  = 
  -\frac{\lambda}{2D}\frac{\varphi^{ng} |\varphi^{ng}|}{\rho^{ng}} - I_f,  		
  \label{eq:ng-momentum}
\end{gather}
\label{eq:euler1}
\end{subequations}
\noindent where $\rho^{h}$, $p^h$, $\varphi^h = \rho^h v$ and $\rho^{ng}$, $p^{ng}$, $\varphi^{ng} = \rho^{ng} v$ are the component densities, pressures and mass fluxes of \htwo and NG in the mixture, respectively, and $v$ is the velocity of the gas.  In equations \eqref{eq:euler1}, these quantities vary in both space and time.   In the momentum balance equations \eqref{eq:h2-momentum} and \eqref{eq:ng-momentum},  $I_f$ represents the interaction force between the two gases. The other parameters in Eq. \eqref{eq:euler1} are the friction factor, $\lambda$ and the diameter, $D$ of the pipe. For simplicity of presentation, it has been assumed that the pipes are horizontal, so that the effects related to change in the elevation of the pipe have been ignored in Eq. \eqref{eq:h2-momentum} and \eqref{eq:ng-momentum}.  We omit the kinematic viscosity terms in the momentum equations, which are negligible in the typical setting of pipeline flows \cite{osiadacz1984simulation}.

In this study, we are interested in the steady-state flow regime. Thus, setting the time derivatives in Eq. \eqref{eq:euler1} to zero and adding the momentum balance equations for \htwo and NG to eliminate $I_f$, we obtain
\begin{subequations}
\begin{gather}
    \varphi^h = \text{ constant, } \varphi^{ng} = \text{ constant, } \label{eq:ss-mass} \\
    \frac{d}{dx}\left(p^h + p^{ng}\right) = -\frac{\lambda}{2D}\left(\frac{\varphi^{h} |\varphi^{h}|}{\rho^{h}} + \frac{\varphi^{ng} |\varphi^{ng}|}{\rho^{ng}}\right). \label{eq:ss-momentum}
\end{gather}
\label{eq:ss-euler}
\end{subequations}
Eq. \eqref{eq:ss-mass} indicates that in a steady flow regime, the mass flux of \htwo and NG stays the same across all cross-sections of the pipe. Now, if $\rho$, $p$, and $\varphi$ denote the density, pressure, and mass flux of the mixture in the pipe, and if $\gamma$ denotes the constant mass fraction of \htwo throughout the pipe (in steady flow, $\gamma$ is constant due to Eq. \eqref{eq:ss-mass}), then we have:
\begin{subequations}
\begin{gather}
\rho^h = \gamma \cdot\rho, ~~ \rho^{ng} = (1-\gamma)\cdot \rho, \label{eq:density-fraction} \\
\varphi^h = \gamma \cdot\varphi, ~~ \varphi^{ng} = (1-\gamma) \cdot\varphi. \label{eq:flux-fraction}
\end{gather}
\label{eq:fractions}
\end{subequations}
The governing equations in Eq. \eqref{eq:ss-euler} are supplemented with the equation of state (EoS)  that relates the mixture density $\rho$ and pressure $p$ of the gas in the form
\begin{equation}
p = p^h + p^{ng} = a_h^2 \rho^h + a_{ng}^2 \rho^{ng} = \rho\left(\gamma a_h^2 + (1 - \gamma) a_{ng}^2 \right). 
\label{eq:eos}
\end{equation}
%
% \begin{subequations}
% \begin{flalign}
% & p = p^h + p^{ng} = a_h^2 \rho^h + a_{ng}^2 \rho^{ng} \label{eq:partial-pressure} \\
% \Rightarrow ~~ & p = \left(\gamma a_h^2 + (1 - \gamma) a_{ng}^2 \right) \rho. \label{eq:p-rho-relation}
% \end{flalign}
% \label{eq:eos}
% \end{subequations}
The EoS in Eq. \eqref{eq:eos} assumes that  the \htwo-NG mixture behaves like a mixture of two ideal gases. Here, $a_h$ and $a_{ng}$ denote the speed of sound for the ideal gases \htwo and NG respectively at constant temperature $T$. Eq. \eqref{eq:eos} is obtained by using Eq. \eqref{eq:density-fraction}. Defining a function $V: \mR \rightarrow \mR$ as 
\begin{flalign}
    V(\gamma) \triangleq \gamma a_h^2 + (1 - \gamma) a_{ng}^2, \label{eq:V}
\end{flalign}
we can rewrite Eq. \eqref{eq:eos} as $p = V(\gamma)\cdot \rho$.  Finally, we let $f = A \varphi$ denote the constant mass flow through the cross-sectional area of the pipe $A$. 

We note that the ideal gas assumption is quite strong, because natural gas is highly non-ideal at transmission pipeline pressures, and the state equation for the mixture would require the solution of a system of nonlinear equations in practice.  
% Supposing that the pressure-density relation for each constituent gas is a bijection at constant temperature, we claim that the properties of actual non-ideal flows are similar to solutions predicted using ideal state equations, up to a nonlinear transformation.
% need to be careful with what we say
For simplicity of exposition, and in order to focus on the role of concentration $\gamma$, we proceed using Eq. \eqref{eq:eos} and relegate the non-ideal extensions to future studies.

We now combine Eq. \eqref{eq:ss-euler}, \eqref{eq:fractions}, and \eqref{eq:eos} to rewrite the governing equation for steady flow of the mixture through a single pipeline in terms of mixture variables $p$, $\rho$, and $f$ as
\begin{gather}
    \frac{dp}{dx} = -\frac{\lambda}{2DA^2} \frac{f|f|}{\rho}, \label{eq:mixture-momentum-balance}
\end{gather}
where the EoS in Eq. \eqref{eq:eos} defines the relationship between $p$ and $\rho$. Integrating Eq. \eqref{eq:mixture-momentum-balance} along the length of the pipe with the end-point data labelled with subscripts $1$ and $2$  and constant mass flow $f$
in the direction from point $x_1$ to $x_2$, the end-point pressures $p_1$ and $p_2$ are related by
\begin{gather}
    p_2^2 - p_1^2 = -\frac{\lambda L}{DA^2} \cdot V(\gamma) \cdot f|f|, \label{eq:steady-physics}
\end{gather}
where $L = |x_2 - x_1|$ is the length of the pipe. In Eq. \eqref{eq:steady-physics}, if $f$ is negative, it implies that the direction of flow is from $x_2$ to $x_1$. If we define a potential function $\pi : \mR \rightarrow \mR$ to have the value $\pi(p) = p^2$, and let $\pi_i$ denote the value  $\pi(p_i)$ then, Eq. \eqref{eq:steady-physics} can be rewritten in terms of the potentials as 
\begin{gather}
    \pi_2 - \pi_1 = -\frac{\lambda L}{DA^2}\cdot V(\gamma) \cdot f|f|. \label{eq:steady-physics-potential}
\end{gather}

\subsection{Non-dimensionalization of the governing equations} \label{subsec:nd}

As is well-known for a single gas \cite{Sundar2018,srinivasan2022numerical}, non-dimensionalization of the governing equations can enable numerical solution techniques to avoid scaling issues.
To that end, we choose nominal length, pressure, density, and velocity $l_0$, $p_0$, $\rho_0$ and $v_0$ respectively, and set the nominal mass flux, area, and mass flow to $\varphi_0 = v_0 \rho_0$, $A_0 = 1$, and $f_0 = A_0 \varphi_0$. Then setting $\bar L = L/l_0$, $\bar p = p/p_0$, $\bar f = f/f_0$, $\bar D = D/l_0$, and $\bar \pi(\cdot) = \pi(\cdot)/p_0^2$,  where the scaling utilizes the factors
$a_0 = \sqrt{a_h \cdot a_{ng}} \approx 672$ \si{\meter\per\second},
$v_0 = a_0 \cdot M \approx 6.72$ \si{\meter\per\second}, $M \approx 0.01$, and
$\bar V = V/(a_0^2)$, Eq. \eqref{eq:steady-physics-potential} reduces to
\begin{gather}
    %\bar \pi_2 - \bar \pi_1 = -\frac{\lambda \bar L}{\bar D \bar A^2} V(\gamma) \bar f| \bar f| \left( \frac{v_0^2 \rho_0^2}{p_0^2}\right), \label{eq:physics-nd}
    \bar \pi_2 - \bar \pi_1 = -\frac{\lambda \bar L}{\bar D \bar A^2} \bar V(\gamma) \bar f| \bar f| \left( \frac{v_0^4 \rho_0^2}{M^2 p_0^2}\right), \label{eq:physics-nd}
\end{gather}
where $\bar f$ and $\gamma$ are constants.
Letting 
\begin{flalign}
%\bar \beta \triangleq \left( \frac{\lambda \bar L}{\bar D \bar A^2}\right) \cdot \left( \frac{v_0^2 \rho_0^2}{p_0^2}\right) \label{eq:multiplier}
\bar \beta \triangleq \left( \frac{\lambda \bar L}{\bar D \bar A^2}\right) \cdot \left( \frac{v_0^4 \rho_0^2}{M ^2 p_0^2}\right), \label{eq:multiplier}
\end{flalign}
Eq. \eqref{eq:physics-nd} reduces to 
\begin{flalign}
%\bar \pi_2 - \bar \pi_1 = -\bar  \beta \cdot V(\gamma) \cdot \bar f| \bar f|. \label{eq:physics}
\bar \pi_2 - \bar \pi_1 = -\bar  \beta \cdot \bar V(\gamma) \cdot \bar f| \bar f|. \label{eq:physics}
\end{flalign}
Here, $\bar \beta$ is analogous to the effective resistance of the pipe. 
While we are free to choose the values of the nominal quantities,  we present guidelines in Sec. \ref{sec:results} for choosing appropriate nominal values that lead to better scaling of the problem, and which thereby ensure better performance of the numerical methods. \emph{In subsequent discussions, for ease of presentation, we shall drop the overbar that designates non-dimensional quantities with the understanding that all quantities are dimensionless.}

\section{Steady flow over a network} \label{sec:network}
A pipeline network consists of nodes, pipes, and compressors. Nodes are physical locations where one or more pipes and/or compressors are joined. At each node, a gas mixture can be either injected or withdrawn. Accordingly, the set of nodes are partitioned into slack, injection, and withdrawal nodes. As detailed in Sec. \ref{sec:single-pipe}, each slack node is a node where the pressure and the concentration of the injected gas is known, an injection node is one where a given quantity of gaseous mixture of known concentration is injected, and a withdrawal node is one where a given quantity of mixture is withdrawn. Both pipes and compressors are associated with a pair of nodes, and each compressor is modeled as a pipe of zero length with a prescribed pressure boost \cite{Hari2021,Borraz2016}. 

Before we present the gas flow problem for a network, we  define the notation.\\ %To that end, we let $\mathcal G = (\mathcal N, \mathcal P \cup \mathcal C)$ denote the pipeline network with $\mathcal N$, $\mathcal P$, and $\mathcal C$ representing the set of nodes, pipes, and compressors, respectively. The nodes are partitioned into the subsets $\mathcal S$ (slack nodes), $\mathcal I$ (injection nodes), and $\mathcal W$ (withdrawal nodes). A pipe $(i, j) \in \mathcal P$ connects nodes $i$ and $j$ and a compressor $(i, j) \in \mathcal C$ connects geographically co-located nodes $i$ and $j$. For every pipe $(i, j) \in \mathcal P$, we let $f_{ij}$  denote the non-dimensional steady flow, $\gamma_{ij}$ the concentration of \htwo (mass fraction of \htwo),  while $\lambda_{ij}$, $A_{ij}$, $D_{ij}$ and $L_{ij}$ denote the friction factor, the non-dimensional cross-sectional area, non-dimensional diameter, and non-dimensional length of the pipe respectively. Finally, we let $\beta_{ij}$, defined on the lines of Eq. \eqref{eq:multiplier} as
\textbf{Sets}
\begin{itemize}
    \item $\mathcal G  = (\mathcal N, \mathcal P \cup \mathcal C)$ - graph of a pipeline network 
    \item $\mathcal N, \mathcal P, \mathcal C$ - sets of nodes, pipes and compressors, respectively
    \item $\mathcal S, \mathcal I, \mathcal W$ - sets of slack, injection and withdrawal nodes, respectively
    \item $(i,j) \in \mathcal P$ - the pipe connecting nodes $i$ and $j$
    \item $(i,j) \in \mathcal C$ - the compressor between nodes $i$ and $j$
\end{itemize}

\noindent \textbf{Variables}
\begin{itemize}
    \item $f_{ij}$ - steady flow through pipe/compressor $(i,j)$
    \item $\gamma_{ij}$ - concentration of \htwo in pipe/compressor $(i,j)$
    \item $\eta_i$ - concentration of \htwo (after mixing) at node $i$
    \item $\pi_i$ - squared pressure at node $i$
    \item $\alpha_{ij}$ - compressor ratio in compressor $(i,j)$
    \item $q^s_i,q^w_i$ - supply and withdrawal flows at injection $i \in \mathcal I$ and withdrawal $i \in \mathcal W$ nodes, respectively
\end{itemize}

\noindent \textbf{Parameters}
\begin{itemize}
    \item $\lambda_{ij}$ - friction factor of pipe $(i,j)$
    \item $A_{ij}$ - cross-sectional area of pipe $(i,j)$
    \item $L_{ij,}D_{ij}$ - length and diameter of pipe $(i,j)$
    \item $\beta_{ij}$ - effective resistance of pipe $(i,j)$
%\begin{flalign*}
%    \beta_{ij} \triangleq \left( \frac{\lambda_{ij}  L_{ij}}{D_{ij} A_{ij}^2}\right) \cdot \left( \frac{v_0^4 \rho_0^2}{M^2 p_0^2}\right),
%\end{flalign*}
    \item $\eta^s_i$ - concentration of injection at node $i \in \mathcal I \cup \mathcal S$
\end{itemize}
We assume, without loss of generality, that $\mathcal N = \mathcal S \cup \mathcal I \cup \mathcal W$, i.e., the sets of slack, injection and withdrawal nodes partition the node set $\mathcal N$ of the pipeline network. This can always be done by representing nodes where no net gas is being injected or withdrawn as withdrawal nodes with zero net withdrawal. 
%denote the effective resistance of the pipe $(i, j) \in \mathcal P$. For each compressor $(i, j) \in \mathcal C$, we let $\alpha_{ij}$, $f_{ij}$ and $\gamma_{ij}$ denote the pressure boost ratio or compressor ratio, the non-dimensional flow through the compressor, and the concentration of \htwo in the flow through the compressor respectively. %Each slack node $i \in \mathcal S$ has an injection variable $q_i^s \geqslant 0$ and a specified concentration of injection $c_i^s$. For each injection node $i \in \mathcal I$, we also let $q_i^+ \geqslant 0$ and $c_i^+$ denote the injection and concentration of injection with the only difference being that $q_i^+$ is specified for the injection node. Finally, each withdrawal node $i \in \mathcal W$, is associated with a specified withdrawal mass flow rate of $q_i^- \geqslant 0$. 
The convention for flow direction is as follows: when the gas flows from $i \rightarrow j$ ($j \rightarrow i$) $f_{ij}$ is positive (negative). 
It is assumed for a compressor $(i, j) \in \mathcal C$ that the direction of flow is along the direction of the pressure boost, i.e., $i \rightarrow j$, and hence $f_{ij} \geqslant 0$. %Every node $i \in \mathcal N$ is also associated with two additional variables: a non-dimensional potential $\pi_i$ ($\pi_i = \pi(p_i)$ where $p_i$ is the non-dimensional pressure) and a concentration of \htwo after mixing denoted by $\eta_i$. Any node $i \in \mathcal N$ can have flows that come into $i$ (incoming flows) and go out of $i$ (outgoing flows); injection into node $i$ is an incoming flow and withdrawal from node $i$ is an outgoing flow. Each of the incoming flows may have a different value of mass flow rate and concentration. 
For each node $i \in \mathcal N$, we assume that all the incoming flows mix homogeneously and result in an effective concentration after mixing, denoted by $\eta_i$ and all the outgoing flows from that node may have different flow rates but the same concentration given by the value of $\eta_i$. The Figs. \ref{fig:injection-mixing} and \ref{fig:withdrawal-mixing} illustrate the equation governing $\eta_i$ values at injection and withdrawal nodes respectively assuming that the flow directions are known.  We now reformulate this idea rigorously. 

The direction of flow determines the concentration $\gamma_{ij}$ through mixing at the vertices, i.e., when $f_{ij} > 0$, we have $\gamma_{ij} = \eta_i$ and when $f_{ij} < 0$, $\gamma_{ij} = \eta_i$. This conditional constraint can be equivalently rewritten by employing a single equation
\begin{flalign}
\gamma_{ij} = H(f_{ij}) \cdot \eta_i + (1-H(f_{ij})) \cdot \eta_j, \label{eq:pipe-concentration}
\end{flalign}
where $H(\cdot)$ denotes the Heaviside step function. Eq. \eqref{eq:pipe-concentration} essentially states that the concentration of \htwo in a pipe is determined by the concentration after mixing at the upstream node where the mixture of gases enters the pipe.

Before we present the formulation of the steady flow problem for a \htwo-NG mixture on a network, we list the assumptions made about  the pipeline network.
%\noindent \textit{Assumptions about the network and the provided data:} 
\begin{enumerate}[label=(A\arabic*)]
    \item There is exactly one slack node, i.e., $|\mathcal S| = 1$.\label{assumption:slacks}
    %\item For all the compressors in $\mathcal C$, the pressure boost or compression ratio is known a-priori. 
    \item Compressors cannot be in parallel, i.e., for any two junctions $i$ and $j$ there exists at most one compressor that connects $i$ and $j$. 
    \item Pathological network structures are disallowed, such as compressors in sequence forming a loop.
\end{enumerate}
We remark that assumption in (A1) can be relaxed to $|\mathcal S| \geqslant 1$; nevertheless we do not do so for ease of exposition.
With these assumptions, the network model for NG-\htwo mixed gas transport is presented
\begin{figure}
    \centering
    \includegraphics[scale=0.8]{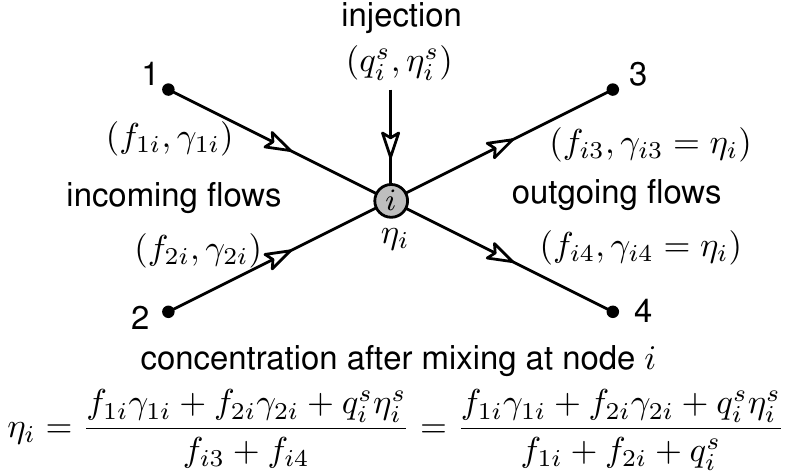}
    \caption{Computation of the mixing concentration $\eta_i$ at a node $i$ for given flow directions when $i$ is either a slack or injection node. If $i$ is a slack node, then $\pi_i$ and $\eta_i^s$ are given, and if $i$ is an injection node, then $q_i^s$ and $\eta_i^s$ are given. The equations are still valid when injection is zero.
    } 
    \label{fig:injection-mixing}
\end{figure}
\begin{figure}
    \centering
    \includegraphics[scale=0.8]{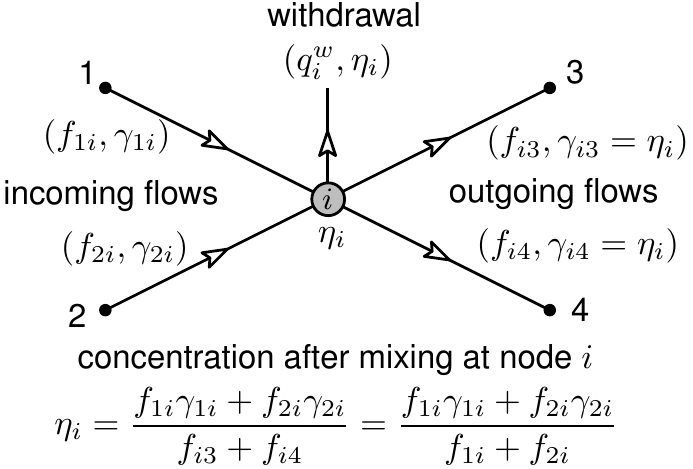} 
    \caption{Computation of the mixing concentration $\eta_i$ for a withdrawal node $i$ for given flow directions. In this case, only $q_i^w$ is given.}
    \label{fig:withdrawal-mixing}
\end{figure}
\vspace{-3mm}

\subsection{Governing equations for the network} \label{subsec:network-gf}

For every pipe $(i, j) \in \mathcal P$, the steady mass flow rate $f_{ij}$, the potential at the ends $i$ and $j$, and the concentration $\gamma_{ij}$ of \htwo  has to satisfy Eq. \eqref{eq:physics}. This equation is rewritten using the network notation above as
\begin{flalign}
\pi_i - \pi_j = \beta_{ij} \cdot V(\gamma_{ij}) \cdot f_{ij} \cdot | f_{ij} |, \label{eq:pipe-physics}
\end{flalign}
where $\beta_{ij}$ is the effective resistance of the pipe.

%When the mass flow in the pipe is zero, i.e, $f_{ij} = 0$, the concentration $\gamma_{ij}$ can be set to any arbitrary value due to the manner in which $\gamma_{ij}$ is defined in Eq. \eqref{eq:flux-fraction}. Also, as we will see in this section, but for Eq. \eqref{eq:pipe-concentration} which defines $\gamma_{ij}$, in all other equations $\gamma_{ij}$ appears as the product $\gamma_{ij} f_{ij}$. Hence, the value $\gamma_{ij}$ takes when $f_{ij} = 0$ is inconsequential and so, we are free to define $H(0)$ as we see fit. In this case, we will define $H(0)$ to take the different values of each pipe $(i, j) \in \mathcal P$ as value that satisfies the equation $H(0) \cdot \eta_i + (1 - H(0)) \cdot \eta_j = 0$. This definition of pipe-dependent $H(0)$ ensures that $\gamma_{ij} = 0$ when $f_{ij} = 0$ thereby removing the non-uniqueness of pipe concentration values when $f_{ij} = 0$ for pipe $(i, j) \in \mathcal P$.

%As it always appears as the product $\gamma_{ij} \cdot f_{ij}$ in all the  terms of a it does not make physical sense to make arguments on the mass fraction of \htwo when the mass flow itself is zero. Hence, for all practical purposes, we assume that when $f_{ij} = 0$, the value of $\gamma_{ij} = 0$. This allows us to define $H(f_{ij}) = 0$ when $f_{ij} = 0$ and in this case, Eq. \eqref{eq:pipe-concentration} become inconsequential. \textcolor{cyan}{If there is no flow, what happens to the concentration ? Can we show this equation is inconsequential then and hence we are free to define H(0) as we see fit?}

Each compressor $(i, j) \in \mathcal C$ is associated with pressure boost and concentration equations
\begin{flalign}
\pi_j = \alpha_{ij}^2 \cdot \pi_i \,\,\text{ and }\,\, \gamma_{ij} = \eta_i. \label{eq:compressor-physics}
\end{flalign}
The concentration of \htwo in the compressor $(i, j) \in \mathcal C$ is given by the concentration after mixing at the inlet $i$ because the mass flow $f_{ij}$ through the compressor is along the direction of compression. The nodal balance equations at each node (described below) govern mass flow through the compressor and the concentration after mixing at each node. 

Finally for each node $i \in \mathcal N$, we have two balance equations, one each for (i) the mixture and (ii) the \htwo component (Note that these two equations could have been formulated equivalently  as balance equations for each constituent). To formulate the balance equations, we let $\partial_+i$  and $\partial_-i$ denote the incoming and outgoing subsets of pipes and compressors at node $i$. Then the nodal balance equations for the mixture are
\begin{subequations}
\begin{flalign}
    % & \sum_{(j,i) \in \partial_+i} f_{ji} - \sum_{(i,j) \in \partial_-i} f_{ij} = q_i^s ~~ \forall i \in \mathcal S & \label{eq:slack-full-balance} \\
    & \sum_{(j,i) \in \partial_+i} f_{ji} - \sum_{(i,j) \in \partial_-i} f_{ij} = q_i^s ~~ \forall i \in \mathcal I & \label{eq:injection-full-balance} \\
    & \sum_{(j,i) \in \partial_+i} f_{ji} - \sum_{(i,j) \in \partial_-i} f_{ij} = -q_i^w ~~ \forall i \in \mathcal W & \label{eq:withdrawal-full-balance}
\end{flalign}    
\label{eq:full-balance}
\end{subequations}
while the  balance equations for \htwo are given by 
\begin{subequations}
\begin{flalign}
    & \sum_{(j,i) \in \partial_+i} \gamma_{ji} f_{ji} - \sum_{(i,j) \in \partial_-i} \gamma_{ij} f_{ij} = \eta_i^s q_i^s ~~ \forall i \in \mathcal S & \label{eq:slack-h2-balance} \\
    & \sum_{(j,i) \in \partial_+i} \gamma_{ji} f_{ji} - \sum_{(i,j) \in \partial_-i} \gamma_{ij} f_{ij} = \eta_i^s q_i^s ~~ \forall i \in \mathcal I & \label{eq:injection-h2-balance} \\
    & \sum_{(j,i) \in \partial_+i} \gamma_{ji} f_{ji} - \sum_{(i,j) \in \partial_-i} \gamma_{ij} f_{ij} = -\eta_i q_i^w ~~ \forall i \in \mathcal W & \label{eq:withdrawal-h2-balance}
\end{flalign} 
\label{eq:h2-balance}
\end{subequations}
Furthermore, the omission of the nodal balance equation for the mixture at the slack nodes is intentional, for in lieu of these we have: 
\begin{flalign}
\pi_i \text{ and } \eta^s_i \; \mathrm{given} ~~ \forall i \in \mathcal S. \label{eq:slack-pressure}
\end{flalign}

In Eq. \eqref{eq:h2-balance}, the concentration of \htwo withdrawn is dependent on the concentration after mixing at the node $i$.  In fact, Eqs. \eqref{eq:injection-h2-balance}  and \eqref{eq:withdrawal-h2-balance} are exactly what was illustrated in Figs. \ref{fig:injection-mixing} and \ref{fig:withdrawal-mixing} earlier once we recognize that all outgoing edges have the same concentration.

% \textcolor{cyan}{I am confused how the equations are solved since the mixing equation is not cited in list of equations. The heaviside equation says edge conc is determined by vertex concentration, but vertex concentration appears only in mixing equation if there is no withdrawal. Need to understand how the equations work given flow directions are not known a priori.}

In the next section, we proceed to formulate the heterogeneous gas flow simulation problem on a pipeline network as a boundary  value problem using the equations presented thus far.

\section{Heterogeneous Gas Flow Simulation}  \label{sec:hgfs}
The steady-state heterogeneous gas flow simulation (HGFS) on a pipeline network is a nonlinear system of equations. The variables for the HGFS problem, also refered to as state variables, are (i) the mass flow rate, $f_{ij}$ and the concentration of \htwo, $\gamma_{ij}$ in each pipe and compressor $(i,j)$ in the network,  and (ii) the concentration of \htwo after mixing, $\eta_i$ at every node $i \in N$ and the potential $\pi_i$ at every non-slack node $i \in \mathcal N \setminus \mathcal S$. In total, there are $2 \cdot \left( |\mathcal P| +  |\mathcal C| + |\mathcal N| \right) - |\mathcal S|$ variables. As for the boundary conditions, we are provided with (i) the concentration of injection of \htwo and the total pressure at each slack node in $\mathcal S$, (ii) the net injection and the concentration of injection of \htwo at each injection node in $\mathcal I$, (iii) the net withdrawal at each withdrawal node in $\mathcal W$, and (iv) compression ratio $\alpha_{ij}$ for each compressor. Using these variables and boundary conditions, the steady-state HGFS on a pipeline network takes the form:
\begin{subnumcases} {\label{eq:hgfs}
\text{$\mathtt{SIM}$: } }
\text{Eq. \eqref{eq:pipe-concentration}, \eqref{eq:pipe-physics}} & \text{ -- (pipe physics)}  \\ 
\text{Eq. \eqref{eq:compressor-physics}} & \text{ -- (compressor physics)} \\
\text{Eq. \eqref{eq:full-balance}, \eqref{eq:h2-balance}} & \text{ -- (nodal balance)}  
\end{subnumcases}
with the following boundary conditions 
\begin{subnumcases} {\label{eq:hgfs-given}
\text{$\mathtt{BDY}$: } }
 (q_i^s, \eta_i^s) \quad \forall i \in \mathcal I & \text{ -- (injection node data) } \\ 
    q_i^w \quad \forall i \in \mathcal W  & \text{ -- (withdrawal node data) }\\ 
    \alpha_{ij} \quad \forall (i, j) \in \mathcal C & \text{ -- (compression ratios) } \\
    (\pi_i, \eta_i^s) \quad \forall i \in \mathcal S & \text{ -- (slack node data) }
\end{subnumcases}
The number of nodal balance, pipe physics, and compressor physics equations in $\mathtt{SIM}$ is given by $2 \cdot |\mathcal N| - |\mathcal S|$, $2 \cdot |\mathcal P|$, and $2 \cdot |\mathcal C|$, respectively, resulting in a total of $2 \cdot \left( |\mathcal P| +  |\mathcal C| + |\mathcal N| \right) - |\mathcal S|$ equations. A solution to the HGFS problem on a pipeline network is obtained by solving $\mathtt{SIM}$ with the boundary conditions in $\mathtt{BDY}$. 
Once a solution is obtained, the unknown slack injections can be determined as
\begin{flalign}
    q_i^s = \sum_{(j,i) \in \partial_+i} f_{ji} - \sum_{(i,j) \in \partial_-i} f_{ij} ~~ \forall i \in \mathcal S & \label{eq:slack-full-balance} 
\end{flalign}
Several studies have established uniqueness of solutions for boundary value problems on physical flow networks, including for compressible \cite{vuffray2015monotonicity,misra2020monotonicity,kekatos2019} and incompressible fluid flow \cite{singh2020flow} with actuators. Each of these proofs considers transport of a homogeneous fluid, and relies on the monotonicity of the flow equation on each individual edge.  That is, the difference in potentials on the left hand side of equation \eqref{eq:pipe-physics} can only increase with increasing flow, for fixed uniform concentration $\gamma_{ij}$, and this holds for meshed networks \cite{vuffray2015monotonicity} and transient initial boundary value problems \cite{misra2020monotonicity}.  However, if flow $f_{ij}$ increases and $\gamma_{ij}$ decreases, it may be possible for the potential difference to actually \emph{decrease}.  \emph{Thus establishing uniqueness of boundary value problems for heterogeneous fluid flows on general networks remains open.} 
However, uniqueness of solutions for $\mathtt{SIM}$ can be established in two special settings, (i) for networks with tree structures such as that illustrated in Figure \ref{fig:treenet} below (a proof of this claim is included in the Appendix \ref{sec:uniqueness}) and (ii) for meshed networks with only one slack node and no injection nodes. The proof of the latter follows by observing that when there are no injection nodes in the network, the concentration of \htwo in the entire pipeline network is equal to the slack injection concentration. 
\begin{figure}[h!]
    \centering
    \includegraphics[width=0.6\linewidth]{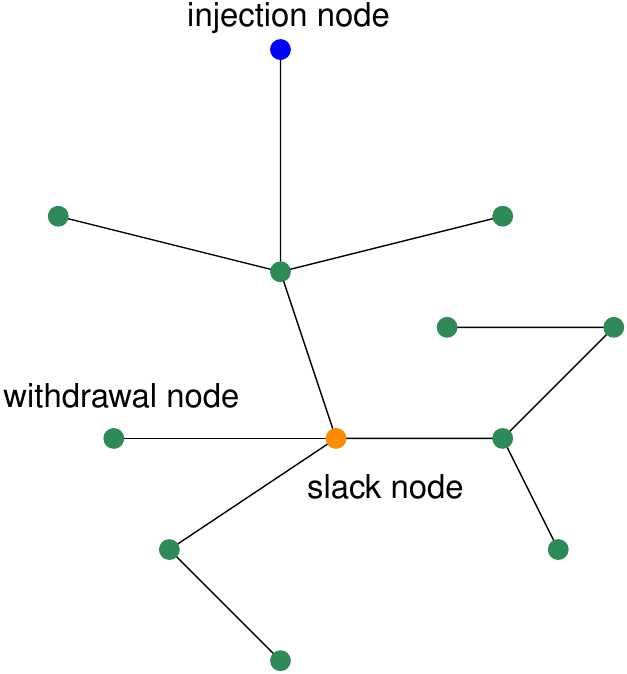}
    \caption{Example of a tree gas pipeline network}
    \label{fig:treenet}
\end{figure}

\section{Heterogeneous Gas Flow Optimization}  \label{sec:hgfo}
We now extend the HGFS problem $\mathtt{SIM}$ with the boundary conditions in $\mathtt{BDY}$  to the heterogeneous gas flow optimization (HGFO) formulation by adding inequality constraints and an objective function. In the context of optimization,  the boundary conditions in $\mathtt{BDY}$  are not known apriori (unlike HGFS)  and need to be determined. Hence, we shall introduce additional decision variables for the same and refer to them as control variables. The goal of the HGFO is to formulate and solve an optimization problem for heterogeneous gas flow over a pipeline network with (i) an appropriate objective function, (ii) the equation system in $\mathtt{SIM}$, and (iii) inequality constraints to model the physical limits of the pipeline network, the injection and withdrawal limits. We now proceed to detail the inequality constraints for the HGFO. 

\subsection{Inequality constraints} \label{sec:inequality}
Operating limits of the pipeline network take the form of simple inequality constraints on the state variables. To that end, constraints on the nodal potential (squared pressures), pipe and compressor mass flow rates, compression ratios, and concentration of \htwo are stated as follows:
\begin{subequations}
    \begin{flalign}
        \pi^{\min} \leqslant \pi_i \leqslant \pi^{\max}, \quad & \forall i \in \mathcal N, \label{eq:pressure-limits} \\ 
        f^{\min} \leqslant f_{ij} \leqslant f^{\max}, \quad & \forall (i, j) \in \mathcal P \cup \mathcal C, \label{eq:flow-limits} \\
        1 \leqslant \alpha_{ij} \leqslant \alpha^{\max}, \quad & \forall (i, j) \in \mathcal C, \label{eq:ratio-limits} \\
        0 \leqslant \eta_i \leqslant \gamma^{\max}, \quad & \forall i \in \mathcal N \setminus \mathcal S, \label{eq:concentration-limits} \\ 
        \eta_i \geqslant 0, \quad & \forall i \in \mathcal S. \label{eq:concentration-limits-slack} 
    \end{flalign}
\label{eq:state-limits}
\end{subequations}
Now proceeding to the control variables,  we start with the withdrawal nodes which represent either residential or industrial users with specified energy demands. For each withdrawal node, we introduce a control decision variable $q_j^w$ that denotes the mass flow rate of mixed gas that is being withdrawn at that node. Since the energy content of the gas being withdrawn at node $j \in \mathcal W$ depends on both the mass flow rate $q_j^w$ and the \htwo concentration $\eta_j$ (the state variable at the node $j$), we define a new variable $g_j$ that represents the energy or heat content of the gas mixture being withdrawn at node $j$ per unit time in terms of $q_j^w$ and $\eta_j$ and its associated inequality constraints as follows:
\begin{subequations}
\begin{flalign}
    & g_j = q_j^w \cdot (R_{\htwo} \cdot \eta_j + R_{NG} \cdot (1-\eta_j)) \quad  \forall j\in\mathcal W, \label{eq:heat_content} \\
    & g_j \leqslant g_j^{\max} \quad \forall j \in \mathcal W. \label{eq:heat_demand}
\end{flalign}
In Eq. \eqref{eq:heat_demand}, $g_j^{\max}$ is interpreted as the requirement for energy at the withdrawal node $j \in \mathcal W$, $R_{\htwo}$ and $R_{NG}$ are calorific values of \htwo and natural gas, respectively.

Similar to the withdrawal nodes, each injection node $j \in \mathcal I$ is associated with a nodal injection control variable denoted by $q_j^s$. Also, the concentration of injection at the injection node is fixed to a pre-specified value. The concentration being fixed to a value stems from the practicality of being able to produce any quantity of mixed gas as a given concentration at the injection node. Nevertheless, we remark that this concentration can also be a variable and the formulation presented in this section will extend to that case as well. The inequality constraint for the injection node is then given by 
\begin{flalign}
    \label{eq:supply_bound}
    0 \leqslant q_j^s \leqslant q_j^{s,\max}, \text{ and } \eta_j^s \text{ is fixed } \forall j \in \mathcal I.
\end{flalign}
Finally, for a slack node, the pressure and the concentration of injection is assumed to be pre-specified, i.e.,
\begin{flalign}
\pi_i, \eta^s_i\text{ are fixed } \quad \forall i \in \mathcal S. \label{eq:slack-node}
\end{flalign}
\end{subequations}

\subsection{Objective} \label{sec:objective}

The objective function represents the value provided by the pipeline to its users; it includes receipts from consumers who purchase energy at withdrawal nodes and subtracts payments for gas flows into injection nodes, together with the operating cost of compressors. 
%\subsubsection{Bid and Offer Price} 
The inflow concentration is fixed at each injection node, and the offer prices for the supplied mixture  are a linear combination of the fixed prices $C_{\htwo}^s$ \si{\$\per\kilogram} and $C_{NG}^s$ \si{\$\per\kilogram}  for pure \htwo and $NG$ respectively.  The value of the purchase depends on the bids $C_{\htwo}^w$ \si{\$\per\kilogram} and $C_{NG}^w$ \si{\$\per\kilogram} for each gas, which are combined similarly.  Then the revenue created by consumer withdrawals and supplier injections is given by
\begin{align}
\label{eq:obj_gas}
    W^{\mathrm{gas}} & = \sum_{i \in W} \left(C_{\htwo}^w \cdot \eta_i  + C_{NG}^w \cdot (1-\eta_i)\right) \cdot q^w_i  \nonumber \\
    & \quad - \sum_{j \in I} \left( C_{\htwo}^s \cdot \eta_j^s + C_{NG}^s \cdot (1-\eta_j^s)\right) \cdot q_j^s.
\end{align} 
% \begin{equation}
%     W_{gas} = \sum_{j \in I} C^s_j q_j^s - \sum_{j \in W} (\eta_j C^{\htwo}_j + (1-\eta_j)C^{NG}_j)q^w_j   
% \end{equation}
%$C^s (\$/kg)$ denotes the cost of purchasing mixed gas whereas $C^{\htwo}(\$/kg)$ and $C^{NG}(\$/kg)$ denote the bid or selling price of \htwo and $NG$ respectively.
%\subsubsection{Compressor Work}
Here we suppose that prices are identical network wide, e.g., assuming a regional hub price,  but the ratio of the offer prices for \htwo and NG may differ from the ratio  of the bid prices to reflect incentives that may alter the  consumption and production costs.  
The compressor work is determined using the adiabatic compression formula
\begin{equation}\label{eq:compressor_work}
    W^{\mathrm{c}} = \frac{286.76 T}{\omega \cdot G}\left(\frac{\alpha^m - 1}{m}\right) \quad \text{\si{\joule\per\kilogram}},
\end{equation}
where $m = \frac{\kappa}{\kappa - 1}$, $G$ and $\kappa$ denote the specific gravity and specific heat capacity ratio of the gas, and $T$ and $\omega$ denote the temperature and compressor efficiency \cite{menon05}. As the physical properties of the gas mixture depend on the concentration, we approximate the specific gravity and specific heat capacity ratio for the mixture by linear interpolation, resulting in
\begin{subequations}\label{eq:mixed_property}
\begin{align}
    G = \gamma \cdot G_{\htwo} + (1-\gamma) \cdot G_{NG}, \\
    \kappa = \gamma \cdot \kappa_{\htwo} + (1-\gamma) \cdot \kappa_{NG},
\end{align}
\end{subequations}
where $G_{\htwo}, G_{NG}, \kappa_{\htwo}$ and $\kappa_{NG}$ are constant parameters. 
This simplification is made to reduce complexity of the resulting optimization problem.  The total objective value produced by operating the gas network is:
\begin{equation} \label{eq:obj_total}
    W_{\mathrm{tot}} =  \delta \cdot W^{\mathrm{gas}} - (1-\delta) \cdot \zeta \cdot \left(\sum_{(i,j) \in C} W_{ij}^{\mathrm{c}} \cdot f_{ij}\right),
\end{equation}
%$\beta$ is an economic factor to scale the different costs 
where $\zeta$ is the per-unit electricity cost to run compressors, and $\delta$ is a parameter that can be used to place priority on maximizing deliveries or minimizing operating costs.
The complete optimization problem is written as:
\begin{flalign*}
    \mathtt{OPT} ~~ \triangleq ~~ & \max ~~ W_{\mathrm{tot}} ~~~ \text{ subject to: } \\
    & \text{Eq. in } \mathtt{SIM} \text{ -- (component physics) } \\ 
    & \text{Eq. \eqref{eq:state-limits}} \text{ -- (engineering limits)} \\
    & \text{Eq. \eqref{eq:heat_content}, \eqref{eq:heat_demand}} \text{ -- (withdrawal node constraints)} \\ 
    & \text{Eq. \eqref{eq:supply_bound}} \text{ -- (injection node constraints)} \\ 
    & \text{Eq. \eqref{eq:slack-node}} \text{ -- (slack node constraints)} \\ 
    & \text{Eq. \eqref{eq:obj_gas}, \eqref{eq:compressor_work}, \eqref{eq:mixed_property}} \text{ -- (revenue and costs)}
\end{flalign*}
The solution to problem $\mathtt{OPT}$ yields the state variables in $\mathtt{SIM}$ and  determines the boundary data in $\mathtt{BDY}$. 

\begin{remark}
In the simulation problem, the balance equations at the slack nodes \eqref{eq:slack-full-balance} are not used since the slack injection itself is unknown, and come into play only as a post-processing step to determine the unknown slack injections. However, in the optimization setting, one could append \eqref{eq:slack-full-balance} to the existing constraints in $\mathtt{OPT}$ without adverse results. In fact, if relaxations are used to solve $\mathtt{OPT}$, then appending these (linear) constraints to further constrain the search space is advantageous.
\end{remark}

\subsection{Reformulations of $\mathtt{OPT}$}\label{Reformulation}
The $\mathtt{OPT}$ is a Nonlinear Program (NLP) and the non-differentiable Heaviside function appears in  the pipe concentration equation in Eq. \eqref{eq:pipe-concentration}. For the purposes of this article, we are concerned with solving this NLP to local optimality. In general, local solvers for NLPs cannot deal with non-differentiable functions and hence, in this section, we examine several possible reformulations of Eq. \eqref{eq:pipe-concentration} to enable direct application of local optimization solvers. \\

\noindent \textbf{Integer reformulation}:  
The obvious reformulation of the Heaviside function is by the introduction of binary variables. To that end we let $y_{ij}$ be a binary variable which takes a value $1$ when $H(f_{ij}) = 1$ and $0$, otherwise. Using this additional binary variable, Eq. \eqref{eq:pipe-concentration} can be reformulated by the equations
\begin{subequations}\label{eq:integer}
\begin{gather}
    \gamma_{ij} = y_{ij} \cdot \eta_i + (1-y_{ij}) \cdot \eta_j, \\
    -M \cdot (1-y_{ij}) \leqslant f_{ij} \leqslant M \cdot y_{ij}, \\
    y_{ij} \in \{0,1\},
\end{gather}
\end{subequations}
where $M$ is a bound on the mass flow, $f_{ij}$ in the pipe. In this context, we remark that using this reformulation will make $\mathtt{OPT}$ a mixed-integer nonlinear program (MINLP) for which there are even fewer off-the-shelf local optimization solvers than for NLPs.\\ 

\noindent \textbf{Non-smooth reformulation}: Here, we propose to reformulate Eq. \eqref{eq:pipe-concentration} using two non-smooth equations
\begin{subequations}\label{eq:nonsmooth}
\begin{align}
    (f^2_{ij} + f_{ij} |f_{ij}|)\cdot(\gamma_{ij}-\eta_i) = 0, \label{eq:i-nonsmooth}\\
    (f^2_{ij} - f_{ij} |f_{ij}|)\cdot(\gamma_{ij}-\eta_j) = 0. \label{eq:j-nonsmooth}
\end{align}
\end{subequations}
We see that when $f_{ij} > 0$, Eq. \eqref{eq:i-nonsmooth} will yield $\gamma_{ij} = \eta_i$ and alternatively, when $f_{ij} < 0$, Eq. \eqref{eq:j-nonsmooth} will yield $\gamma_{ij} = \eta_j$ which is exactly the condition that is modeled by Eq. \eqref{eq:pipe-concentration}. The reformulation in Eq. \eqref{eq:nonsmooth} would result in an NLP and this can readily be handled by off-the-shelf local optimization solvers. \\

\noindent \textbf{Complementarity constraints}: Before we present this reformulation, we recall the definition of complementarity constraints. A complementarity constraint enforces that two variables are complementary to each other; i.e., that the following conditions hold for scalar variables $x$ and $y$:
$$ x \cdot y = 0, \quad x \geqslant 0, \quad y \geqslant 0. $$ 
The condition above is sometimes expressed more compactly as $0 \leqslant x \perp y \geqslant 0$. Although, complementarity constraints might seem to be a difficult class of nonlinear constraints to tackle algorithmically, we present such a reformulation for Eq. \eqref{eq:pipe-concentration} because some off-the-shelf NLP solvers are equipped to handle them in an efficient manner. The reformulation is
\begin{subequations}\label{eq:complementarity}
\begin{align}
    & f_{ij} = s^1_{ij} - s^2_{ij}, \\
    & \gamma_{ij} = \nu_{ij} \cdot \eta_i + (1-\nu_{ij}) \cdot \eta_j, \\
    & 0 \leqslant s^1_{ij} \perp (1 - \nu_{ij}) \geqslant 0, \\
    & 0 \leqslant s^2_{ij} \perp \nu_{ij} \geqslant 0, 
\end{align}
\end{subequations}
where, the sets of variables $s^1_{ij}$, $s^2_{ij}$, and $\nu_{ij}$ are auxiliary variables introduced for the reformulation. 

Although all these reformulations still yield non-smooth problems, they can be solved by smooth approximations or relaxation methods \cite{scholtes,Kai_and_Saif}. The reformulations are equivalent but will have different numerical properties and performance depending on the type of optimization solver used, as seen in the next section on computational results. 

\section{Results} \label{sec:results}
In this section, we present extensive computational results to corroborate the utlity of both the simulation and the optimization problem formulations in modeling the flow of NG-\htwo gas mixture in a network of pipelines. To that end, the proposed mixed gas network simulation and optimization are tested on four example networks (i) an 8-node network with a loop, (ii) an 8-node tree network, (iii) the GasLib-11 network, and (iv) the GasLib-40 network. The GasLib-11 and the GasLib-40 are standard benchmark pipeline networks \cite{gaslib} used for testing models for transport of natural gas through a network of pipelines. We now present a brief description for each network. 

% \textcolor{cyan}{write a line explaining how these are modified versions of GasLib. If you mean that you are using GasLib topology, then simply state so, and remove word modified.}

\subsection{Description of the test networks} \label{subsec:network-data}
The 8-node network with a loop is a single gas network with one slack node (J1) and two withdrawal nodes (J3 and J5); the network is shown in Fig. \ref{fig:8-node}. The slack node pressure for this network is set at 5 \si{\mega\pascal} and the concentration of \htwo injected at this node is set to 10\%, i.e., $\eta^s = 0.1$. The 8-node tree network is similar to the 8-node network with loop, except for the absence of the pipe P\textsubscript{3}, as shown in Fig. \ref{fig:8-node-tree}. The modified GasLib-11 network (see Fig. \ref{fig:GasLib-11}) is a tree with one slack and one injection node (J6 and J7, respectively). The network has three withdrawal nodes (J9, J10 and J11, respectively). The \htwo concentration of the injected flows at the slack and the injection nodes are fixed to 5\% and 7.2\%, respectively.
The GasLib-40 network (see Fig. \ref{fig:GasLib-40}) is another example from \cite{gaslib} with one slack and two injection nodes (J38, J39 and J40, respectively) and twenty nine withdrawal nodes. The slack and injection flow concentrations are fixed ($\eta_{38}^s = 0.05$ at J38,  $\eta_{39}^s = 0.098$ at J39, \&  $\eta_{40}^s = 0.055$ at J40). Next, we present the table of parameters and constants that are used both in the HGFS and HGFO problem formulations.

\begin{figure*}[htbp]
    \centering
    \includegraphics[width=.8\linewidth]{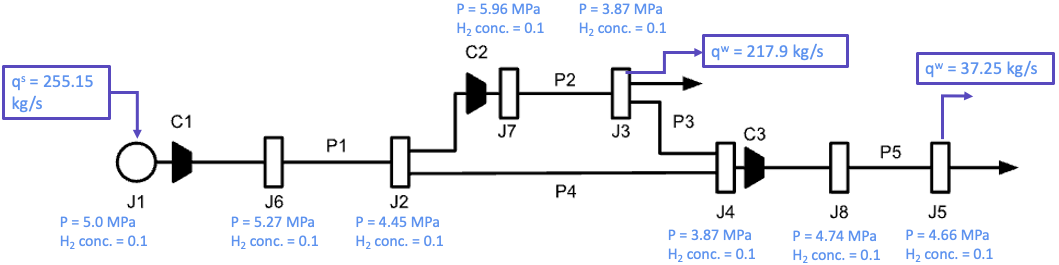} \vspace{-1ex}
    \caption{8-node network with a loop.  The nodal pressure, concentration, and injection/withdrawal for the optimal solution (Table \ref{tab:8-node}) are shown.} 
    \label{fig:8-node}
\end{figure*}
\begin{figure*}[htbp]
    \centering
    \includegraphics[width=.8\linewidth]{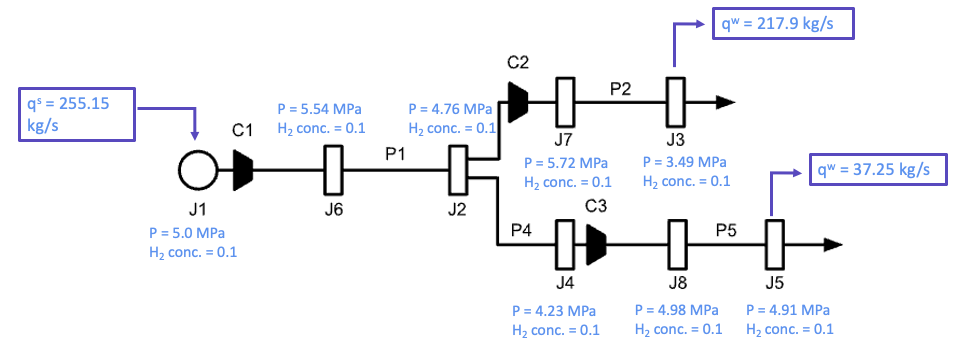} \vspace{-1ex}
    \caption{8-node tree network.  The nodal pressure, concentration, and injection/withdrawal for the optimal solution (Table \ref{tab:8-node-tree}) are shown.}
    \label{fig:8-node-tree}
\end{figure*}
\begin{figure*}[htbp]
    \centering
    \includegraphics[width=.9\linewidth]{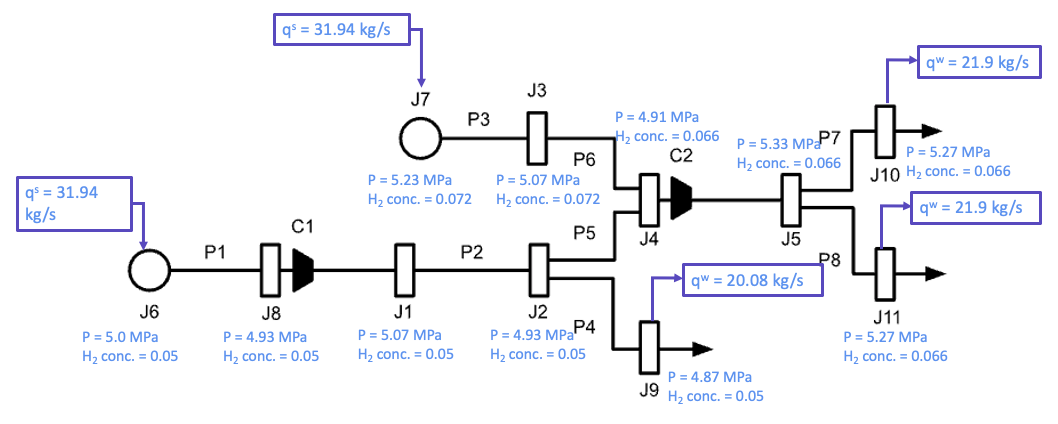}
    \caption{GasLib-11 network. The nodal pressure, concentration, and injection/withdrawal for the optimal solution (Table \ref{tab:GasLib-11}) are shown.}
    \label{fig:GasLib-11}
\end{figure*}
\begin{figure*}[htbp]
    \centering
    \includegraphics[width=\linewidth]{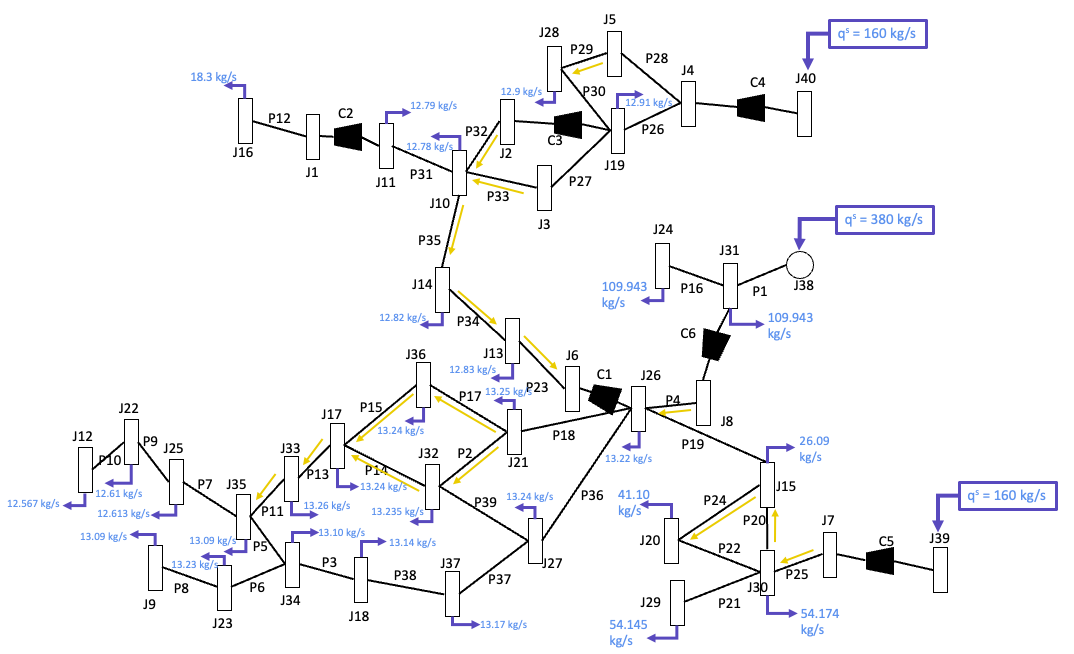}
    \caption{GasLib-40 Network. The nodal pressure, concentration, and injection/withdrawal for the optimal solution (Table \ref{tab:GasLib-40}) are shown.}
    \label{fig:GasLib-40}
\end{figure*}

\subsection{Parameter values and cost coefficients} \label{subsec:params}
The physical parameters and properties for NG and \htwo, along with other cost coefficients for optimization are given in Table \ref{tab:properties}.  These values are used for all test cases considered. 

\begin{table}[htbp]
    \centering
    \begin{tabular}{cc}
    \toprule
    Parameters & Values\\
    \midrule
    Temperature (T) & 288.75 \si{\kelvin} \\
    Speed of Sound (a) & $\sqrt{RT/M_w}$ \\
    $a_{\htwo}, \enskip a_{NG}$ & 1092\si{\meter\per\second},  372 \si{\meter\per\second} \\
    $R_{\htwo}, \enskip R_{NG}$ & 141.8 \si{\mega\joule\per\kilogram}, 44.2 \si{\mega\joule\per\kilogram}\\
    $G_{\htwo}, \enskip G_{NG}$ & 0.0696, 0.6 \\
    $\kappa_{\htwo}, \enskip \kappa_{NG}$ & 1.4, 1.33\\
    $C^s_{\htwo}, \enskip C^s_{NG}$ & 8 \si{\$\per\kilogram}, 2 \si{\$\per\kilogram} \\
    $C^w_{\htwo}, \enskip C^w_{NG}$ & 15 \si{\$\per\kilogram}, 5 \si{\$\per\kilogram} \\
    $\zeta$ & 0.13 \si{\$\per\J} \\
    $\delta$ & 0.95 \\ 
    $\gamma^{\max}$ & 0.10 \\
    \bottomrule
    \end{tabular}
    \caption{Parameter values and cost coefficients}
    \label{tab:properties}
\end{table}

Finally, the maximum heat content ($g_i^{max}$) for each withdrawal node is also pre-determined by calculating the heat content of the maximum withdrawal flow ($q^{max}$) at maximum \htwo concentration ($\gamma^{max}$),
\begin{equation*}
    g^{max}_i = \left(R_{\htwo} \cdot \gamma^{max} + R_{NG} \cdot (1-\gamma^{max})\right) \cdot q_i^{max}.
\end{equation*}

\subsection{Implementation details} \label{subsec:implementation}
We now present a brief overview of the implementation details for both the HGFS and HGFO problems. Both the problems are implemented using the Julia \cite{julia} programming language. The simulation uses a Newton Trust-Region method that is made available through the Julia package entitled ``NLSolve.jl''. As for the optimization problem, it is formulated using the algebraic modeling language JuMP \cite{jump} and solved using off-the-shelf open source and commercial solvers (IPOPT \cite{ipopt} and KNITRO \cite{knitro}, respectively). While we use both the solvers to compare and contrast the computational performance, the integer reformulation can only be solved by KNITRO since it implements a branch-and-bound algorithm to handle integer decision variables. On the other hand, IPOPT is a primal-dual interior-point solver and hence, is capable of solving only the non-smooth and complementarity reformulations of HGFO. In order to understand the relative impact of different solvers and algorithms on the optimization problem formulation and its reformulations,  we compare the performance of different solution methods for the three reformulations  in Sec. \ref{Reformulation}, based on the objective value, number of iterations, and the computational solution times. In this context, it is important to note that both KNITRO and IPOPT are local solvers, i.e., they can only find a locally optimal solution to the HGFO reformulations. Hence, different algorithmic choices in the solvers could result in different objective values; nevertheless, it is guaranteed that each of them is a locally optimal solution to the problem. This fact further motivates the need to study the impact of the different solvers and algorithms on the reformulations. In our presentation of the results, we start with HGFO since the solutions from these problems are used to generate the boundary data in $\mathtt{BDY}$ for the HGFS. 

\subsection{Results for the HGFO} \label{subsec:hgfo-results}
In this section, we present the objective value, the number of iterations and the solution times obtained by using different combination of solvers and algorithms on all the reformulations. The combination of solvers and algorithms used are as follows: 
\begin{enumerate}[label=(\roman*)]
\item KNITRO with a direct interior-point method \cite{knitro_ip-direct}, 
\item KNITRO with a an active-set method \cite{knitro-activeset}, 
\item KNITRO with a Sequential Quadratic Programming (SQP) method, and finally 
\item IPOPT with a direct interior-point method. 
\end{enumerate}
We note that the integer reformulation of the HGFO can only be solved using (i) in combination with a branch-and-bound algorithm which is also implemented in KNITRO \cite{knitro-minlp}, whereas the rest of the reformulations can be solved by all the four solver-algorithm combinations. The Tables \ref{tab:8-node} -- \ref{tab:GasLib-40} present the performance of each solver-algorithm combination on all the four test networks. 

\begin{table}[htbp]
    \centering
    \footnotesize
    \begin{tabular}{cccc}
    \toprule 
    \multirow{2}{*}{Solver-algo.} & \multicolumn{3}{c}{Reformulation results - (Obj./Iter./Time(s))} \\ 
    \cmidrule(lr){2-4} 
    & Non-smooth & Complementarity & Integer \\
    \midrule 
    (i) & \textbf{31.5}/53/0.008 & \textbf{31.5}/16/0.34 & 0.0/-/1.24 \\
    (ii) & \textbf{31.5}/6/0.003 & \textbf{31.5}/6/1.19 & -na- \\
    (iii) & \textbf{31.5}/30/0.184 & 0.0/59/0.48 & -na- \\
    (iv) & \textbf{31.5}/520/0.113 & iter. limit & -na- \\
    \bottomrule
    \end{tabular}
    \caption{HGFO results for the 8-node network with a loop.}
    \label{tab:8-node}
\end{table}

\begin{table}[htbp]
    \centering
    \footnotesize
    \begin{tabular}{cccc}
    \toprule 
    \multirow{2}{*}{Solver-algo.} & \multicolumn{3}{c}{Reformulation results - (Obj./Iter./Time(s))} \\ 
    \cmidrule(lr){2-4} 
    & Non-smooth & Complementarity & Integer \\
    \midrule 
    (i) & \textbf{31.5}/3459/0.562 & \textbf{31.5}/32/0.005 & 26.89/-/0.53 \\
    (ii) & \textbf{31.5}/5/0.002 & \textbf{31.5}/6/0.004 & -na- \\
    (iii) & \textbf{31.5}/66/0.32 & 0.0/7/1.67 & -na- \\
    (iv) & \textbf{31.5}/235/0.38 & iter. limit & -na- \\
    \bottomrule
    \end{tabular}
    \caption{HGFO results for the 8-node tree network.}
    \label{tab:8-node-tree}
\end{table}

\begin{table}[htbp]
    \centering
    \footnotesize
    \begin{tabular}{cccc}
    \toprule 
    \multirow{2}{*}{Solver-algo.} & \multicolumn{3}{c}{Reformulation results - (Obj./Iter./Time(s))} \\ 
    \cmidrule(lr){2-4} 
    & Non-smooth & Complementarity & Integer \\
    \midrule 
    (i) & \textbf{7.52}/52/0.23 & 3.81/113/0.019 & \textbf{7.52}/-/0.009 \\
    (ii) & \textbf{7.52}/29/0.011 & \textbf{7.52}/8/1.19 & -na- \\
    (iii) & \textbf{7.52}/9/1.44 & 3.71/50/1.66 & -na- \\
    (iv) & \textbf{7.52}/48/0.011 & \textbf{7.52}/197/0.048 & -na- \\
    \bottomrule
    \end{tabular}
    \caption{HGFO results for the GasLib-11 network.}
    \label{tab:GasLib-11}
\end{table}

\begin{table}[htbp]
    \centering
    \footnotesize
    \begin{tabular}{cccc}
    \toprule 
    \multirow{2}{*}{Solver-algo.} & \multicolumn{3}{c}{Reformulation results - (Obj./Iter./Time(s))} \\ 
    \cmidrule(lr){2-4} 
    & Non-smooth & Complementarity & Integer \\
    \midrule 
    (i) & \textbf{82.52/}49/0.06 & \textbf{82.52}/299/0.3 & \textbf{82.52}/-/1.325 \\
    (ii) & \textbf{82.52}/15/0.03 & \textbf{82.52}/16/0.04 & -na- \\
    (iii) & 16.76/30/3.67 & 0.54/85/19.7 & -na- \\
    (iv) & \textbf{82.52}/290/0.16 & \textbf{82.52}/2391/2.415 & -na- \\
    \bottomrule
    \end{tabular}
    \caption{HGFO results for the GasLib-40 network.}
    \label{tab:GasLib-40}
\end{table}

In all the tables, the best objective value obtained for each network is highlighted in bold font-face.  It is clear  that the computational performance of the non-smooth reformulation is better than that of  the integer and the complementarity constraint reformulations. Among the solver-algorithm combinations, it is observed that the KNITRO-interior-point combination (i) gives a good balance of computation time and number of iterations. The optimal operating flows and pressures throughout the network for each test case are shown in the Figs. \ref{fig:8-node}--\ref{fig:GasLib-40}. Note that for the GasLib-11 network in Fig. \ref{fig:GasLib-11}, pipeline flows with different concentrations (P5 and P6) mix at the node J4 and form a mixed concentration flow downstream.  As for the GasLib-40 network in Fig. \ref{fig:GasLib-40}, 16 pipes out of 29 in total have negative flows (i.e. in opposite direction with respect to graph orientation) denoted by green arrows. This demonstrates the need to include the directionality constraint \eqref{eq:pipe-concentration} and its reformulations. We remark that, if we solve the HGFO problem with fixed direction of flows as opposed to the manner in which it was done for the results in table \ref{tab:GasLib-40} with directionality constraints, the objective value decreases from 82.52 to 25.55 (almost 70\% decrease in value).

\subsection{Results for the HGFS} \label{subsec:results-hgfs}

We examine solution of the heterogeneous gas flow simulation problem (HGFS) \eqref{eq:hgfs} where the boundary data in $\mathtt{BDY}$ are provided by the solution of HGFO.  The problem is solved using the non-smooth  formulation \eqref{eq:nonsmooth} with the trust-region method.

\begin{table}[htbp]
    \centering
    \footnotesize
    \begin{tabular}{ccc}
    \toprule 
    Network & \# Iterations & Solve time (s) \\
    \midrule 
    8-node & 73 & 0.066 \\ 
    8-node tree & 5 & 0.065 \\ 
    GasLib-11 & 3 & 0.062 \\ 
    GasLib-40 & 30 & 0.18 \\ 
    \bottomrule
    \end{tabular}
    \caption{HGFS results for all the four test networks.}
    \label{simulationresults}
\end{table}

The state variables (node pressure and concentration, pipe flows and concentration) are verified to be equal to the solution obtained by the optimization problem. The numeric results listed in Table \ref{simulationresults} show that the computational time required to solve  the system is negligible in all instances.

\section{Conclusions and Future Work} \label{sec:conc}
In this paper, we derive the formulation for steady flow of hydrogen and natural gas mixtures in a pipeline network including composition tracking variables. We introduce the heterogeneous gas flow simulation (HGFS) problem, and show uniqueness of the solution for tree networks.  We examine several ways of formulating the direction dependent concentration tracking constraint in the heterogeneous gas flow optimization (HGFO) problem using continuous and integer formulations. A comparison of these formulations using standard off-the-shelf commercial and open-source solvers using a suite of different algorithms shows the effectiveness of the non-smooth reformulation over the integer and complementarity formulations. The example of the GasLib-40 network shows the importance of modeling the bi-directionality constraint and accounting for flow direction changes in meshed networks.  Future studies are expected to extend the heterogeneous gas flow formulations for simulation and optimization to transient flows and optimal control. In addition, analysis of optimality conditions and dual variables could be used to develop methods for locational pricing of heterogeneous gas transport,  specifically to price the benefits of reduction in carbon emissions with biomethane and hydrogen utilization.

\printbibliography

\section{Appendix}
\subsection{Proof of uniqueness for a tree} \label{sec:uniqueness}
We start by a the following well-understood result for a tree network with balanced flows. 
\begin{proposition} \label{prop:tree-1}
If a tree network has non-zero (balanced) flow passing through it, there must be  at least one vertex with injection and all incident edges on the vertex with flow is directed out of the vertex and at least one vertex with withdrawal and all incident edges on the vertex with flow is directed into of the vertex.
\end{proposition}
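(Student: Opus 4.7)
The plan is to reduce the proposition to the well-known fact that every nonempty directed acyclic graph (DAG) contains both a source and a sink, and then to read off the injection/withdrawal property from nodal mass balance.

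First I would extract the subgraph $T'$ consisting of those edges of the tree that carry nonzero flow, and orient each edge of $T'$ in the direction of its flow (so that every edge of the oriented graph $\vec T'$ carries strictly positive flow by construction). Since the ambient graph is a tree and hence acyclic, the oriented graph $\vec T'$ is automatically a DAG. The hypothesis that the network carries nonzero flow guarantees that $\vec T'$ contains at least one edge, and in particular at least one connected component with an edge.

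Next I would invoke the standard source/sink property: starting from the tail of any edge of $\vec T'$ and walking backward against incoming edges, acyclicity forces the walk to terminate at a vertex $v$ with no incoming edges in $\vec T'$; by construction this $v$ has at least one outgoing edge. Then every edge of the original tree incident to $v$ either lies outside $\vec T'$ (hence carries zero flow) or is directed outward from $v$ in $\vec T'$ (hence carries positive flow out of $v$). Applying nodal mass balance \eqref{eq:full-balance} at $v$, the net outflow equals $q_v^s - q_v^w$ (with the unspecified quantity set to zero depending on the type of node), and this net outflow is strictly positive by the previous sentence. Hence $v$ cannot be a pure withdrawal vertex, so it is an injection (or slack) vertex whose incident edges carrying flow are all outgoing. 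A completely symmetric argument, walking forward from the head of any edge of $\vec T'$, produces a sink $w$ with strictly positive net inflow, and $w$ must therefore be a withdrawal vertex at which all incident edges carrying flow are directed inward.

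The main obstacle, if one can call it that, is purely a matter of bookkeeping: the subtle point is distinguishing edges of the original tree $T$ (which may carry zero flow at the candidate source or sink and therefore do not constrain the DAG orientation there) from edges of the oriented subgraph $\vec T'$ (whose directions drive the mass-balance computation). The tree hypothesis enters in only one place, namely to guarantee that orienting a forest cannot produce a directed cycle, and once that observation is made the proposition follows directly from the source/sink lemma for DAGs combined with the nodal balance equations \eqref{eq:full-balance}.
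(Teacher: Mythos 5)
Your proof is correct, but it takes a genuinely different route from the paper's. The paper disposes of Proposition~\ref{prop:tree-1} with a one-line induction on the size of the tree, starting from a two-vertex network and observing that each added edge cannot close a cycle; the source/sink structure is propagated implicitly through the inductive step. You instead argue non-inductively: restrict to the flow-carrying edges, orient them by flow direction, note that any orientation of a forest is a DAG, and invoke the standard fact that a finite nonempty DAG has a source and a sink (via the backward/forward walk that must terminate by acyclicity); mass balance \eqref{eq:full-balance} then forces the source to be an injection (or slack) vertex and the sink to be a withdrawal vertex. Your version is more explicit about the two points the paper's sketch glosses over, namely (a) the distinction between edges of the tree that carry zero flow at the candidate vertex and the oriented flow-carrying edges that drive the sign of the net imbalance, and (b) exactly where the tree hypothesis is used (only to guarantee acyclicity of the flow orientation). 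It also makes visible a mild generalization the paper's induction does not: the conclusion holds for any network, looped or not, whose realized flow orientation happens to be acyclic. The paper's induction buys brevity; yours buys a self-contained argument whose only nontrivial ingredient is the source/sink lemma. The one point worth stating carefully in your write-up is the sign convention in \eqref{eq:full-balance}: the identification of the source with an injection vertex and the sink with a withdrawal vertex should be phrased against the paper's convention for $\partial_+ i$ and $\partial_- i$ rather than against physical intuition, though this does not affect the structure of the argument.
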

\begin{proof}
The  proof follows by induction starting with the simplest network containing two vertices and recognizing that as the network grows, introduction of new edges should not form cycles.
\end{proof}
\begin{theorem} \label{thm:uniqueness}
For a tree network with $\mathcal N$ nodes, $\mathcal N-1$ edges, and \emph{a single slack node}, the solution to the HGFS problem is unique if it exists. 
\end{theorem}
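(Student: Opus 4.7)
The plan is to prove uniqueness by sequentially determining each class of state variables ($f_{ij}$, then flow directions, then $\eta_i$ and $\gamma_{ij}$, then $\pi_i$) from the boundary data, exploiting the absence of cycles.

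First, I would show the edge flows $f_{ij}$ are uniquely determined by the mixture nodal balance equations \eqref{eq:full-balance}. For a tree with $|\mathcal{N}|$ nodes, there are $|\mathcal{N}|-1$ edges, and there are $|\mathcal{N}|-1$ mixture balance equations at non-slack nodes (all of $\mathcal I \cup \mathcal W$). Removing any edge $(i,j)$ disconnects the tree into two subtrees, and by summing the mass balance equations over the subtree not containing the slack node, $f_{ij}$ is forced to equal the net withdrawal minus injection over that subtree. Equivalently, the node-edge incidence matrix restricted to $\mathcal{N}\setminus\mathcal{S}$ is square and invertible for a tree, so the linear system for $\{f_{ij}\}$ has a unique solution.

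Second, I would use the now-known flow directions to orient the tree, yielding a DAG. Invoking Proposition \ref{prop:tree-1}, every connected component of the oriented subtree (restricted to edges with $f_{ij}\neq 0$) has source vertices whose external injection (with known $\eta_i^s$) feeds the subtree, and sink vertices that are withdrawal nodes. I then propagate the mixing concentrations $\eta_i$ in topological order: at any vertex $i$ whose upstream neighbors have known $\eta_\cdot$, Eq.~\eqref{eq:pipe-concentration} gives $\gamma_{ji}=\eta_j$ on each incoming edge, and the hydrogen balance \eqref{eq:h2-balance} together with the mixture balance \eqref{eq:full-balance} uniquely determines $\eta_i$ as a mass-weighted average of the upstream concentrations and the (known) injected concentration $\eta_i^s$. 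The compressor relation in \eqref{eq:compressor-physics} propagates $\gamma_{ij}=\eta_i$ analogously. Thus all $\gamma_{ij}$ and $\eta_i$ are pinned down.

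Third, with $f_{ij}$ and $\gamma_{ij}$ fixed, I would propagate pressures outward from the unique slack node. Traversing the tree (say by breadth-first search rooted at the slack), each edge supplies a single scalar equation determining $\pi_j$ from $\pi_i$: pipes via \eqref{eq:pipe-physics}, compressors via \eqref{eq:compressor-physics} with the given $\alpha_{ij}$. Since the underlying graph is acyclic, this uniquely assigns a potential $\pi_i$ to every node.

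The main obstacle is the handling of edges with $f_{ij}=0$, along which concentrations cannot be propagated by the argument above. However, such an edge transports zero mass and zero hydrogen, so it contributes nothing to either balance equation at its endpoints; deleting it yields a forest on which the above argument applies verbatim to each component containing a source of known concentration. The remaining isolated components have no net injection or withdrawal and no inflow, so the hydrogen balance places no constraint on $\eta_i$ at their interior nodes --- but crucially, the associated $\gamma_{ij}$ values along zero-flow edges do not enter any other equation since they are multiplied by $f_{ij}=0$ in \eqref{eq:h2-balance}, and by $f_{ij}|f_{ij}|=0$ in \eqref{eq:pipe-physics}. Consequently, any two solutions must agree on all quantities that are physically determined, which is the content of the uniqueness statement.
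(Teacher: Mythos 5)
Your proof is correct and follows the same three-stage decomposition as the paper: flows are pinned down first via the invertibility of the reduced incidence matrix of a tree with one slack node, concentrations second, and pressures last by traversal from the slack node. The genuine difference is in the middle stage. The paper argues by contradiction: assuming two distinct concentration solutions, it chases the discrepancy upstream edge by edge and invokes Proposition \ref{prop:tree-1} to guarantee that the chase terminates at a source-like vertex whose injected concentration is boundary data, yielding a contradiction. You instead give the constructive dual of that argument: orient the tree by the (now known) flow directions, observe the result is a DAG, and propagate $\eta_i$ in topological order, showing at each vertex that the combination of the mixture balance and the hydrogen balance determines $\eta_i$ as a flow-weighted average of already-known upstream concentrations and the given injection concentration. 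The two arguments are logically equivalent on a tree, but yours doubles as an algorithm for computing the solution, while the paper's is slightly leaner in that it never needs to verify that the induction actually determines $\eta_i$ at each step (it only needs to rule out two distinct values). Your explicit treatment of zero-flow edges is a point the paper's proof silently skips over: when $f_{ij}=0$ on an edge, $\gamma_{ij}$ (and $\eta_i$ at a node with no inflow and zero injection/withdrawal) is not constrained by the equations, so uniqueness strictly holds only for the physically determined quantities; flagging this is a legitimate refinement of the statement rather than a flaw in your argument.
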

\begin{proof}
The flow balance equations \eqref{eq:full-balance} can be written in matrix form as $\bm A \bm f = \bm q$, where the matrix $\bm A$ is the reduced incidence matrix for the graph obtained by discarding the rows corresponding to the slack vertices.
Moreover, the structure of the incidence matrix directly implies the flows must be balanced, i.e., the sum of all injections and withdrawals must be zero \cite{srinivasan2022numerical}. 
For a tree with a single slack node,  $\bm A$ is invertible and hence a unique solution for $\bm f$ exists. Thus, the upstream and downstream directions are now known for all edges.
If we can now show that the solution to \eqref{eq:h2-balance} (the concentration) is also unique, then it will conclude the proof.

To that end, suppose that there are two solutions $\bm \gamma^1, \bm \gamma^2$, i.e., an edge exists with valid concentration $\gamma^1_{ij}$ as well as $\gamma^2_{ij}$. The value is determined by mixing at the upstream vertex, so there must be two valid nodal concentrations at the vertex.  
(A) If the vertex is such that it has only outgoing edges, then it must either  have given non-zero injection with specified concentration or   be a slack node having unknown injection at specified concentration. In either case, the non-unique nodal concentration contradicts the given boundary condition.
(B) However, if the vertex has incoming edge(s), then there must be non-unique concentration along at least one incoming edge. We may choose one such edge without loss of generality and follow the upstream direction to arrive at another vertex with non-unique concentration and again consider possibilities (A) and (B).
Since the graph is a tree, the proposition \ref{prop:tree-1} assures us that by repeated consideration of (B), possibility (A) will eventually occur, thus concluding the proof.

Now armed with the knowledge that both the flows and concentrations are unique, it is easy to see that the pressures must also be unique if they exist.  One can see this by starting at the slack node and computing pressures of each node using \eqref{eq:pipe-physics} while traversing a path.
\end{proof}

\end{document}